\newtheorem{defi}{Definition}[section]
\newtheorem{teorema}[defi]{Theorem}
\newtheorem{lema}[defi]{Lemma}
\newtheorem{nota}[defi]{Remark}
\def\R {{\rm I}\hskip -0.85mm{\rm R}}
\def\a{\alpha}
\def\e{\varepsilon}
\def\D{\Delta}
\def\d{\delta}
\def\l{\lambda}
\def\O{\Omega}
\def\p{\partial}
\def\v{\varphi}
\def\ov{\overline}
\def\un{\underline}
\def\n{\nabla}
\begin{document}
\vskip 1cm
\begin{center}
\vskip 0.1cm
{\bf\huge Some remarks on the comparison principle in  Kirchhoff equations}
\end{center}
\vskip 0.3cm
\begin{center}

{\sc Giovany M. Figueiredo$^1$ and Antonio Su{\'a}rez$^2$,}
\\
\vspace{0.5cm}

1. Universidade Federal do Par\'a, Faculdade de Matem\'atica \\
CEP: 66075-110 Bel\'em - Pa , Brazil \\

2. Dpto. de Ecuaciones
Diferenciales y An{\'a}lisis Num{\'e}rico\\ Fac. de Matem{\'a}ticas, Univ. de Sevilla\\
C/. Tarfia s/n, 41012 - Sevilla, SPAIN,\\

\medskip

E-mail addresses: giovany@ufpa.br, suarez@us.es
 \end{center}
\vskip 1cm

\begin{abstract}
In this paper we study the validity of the comparison principle and the sub-supersolution method for Kirchhoff type equations. We show that these principles do not work when the Kirchhoff function is increasing, contradicting some previous results. We give an alternative sub-supersolution method and apply it to some models.
\end{abstract}

\vskip 1cm

\noindent{{\bf Key Words}. Kirchhoff equation, comparison principle, sub-supersolution method.}
\newline
\noindent{{\bf AMS Classification.} 45M20, 35J25, 34B18.} \vskip 0.4cm

\section{Introduction}

In the last years the nonlinear elliptic Kirchhoff equation  has attracted much attention, see for instance \cite{} and references therein. The equation has the following general form
\begin{equation} \label{uno}
\left \{ \begin{array}{ll}
-M(\|u\|^2)\D u=f(x,u) & \mbox{in $\O$,}\\
u=0 & \mbox{on $\p\O$,}
\end{array}\right.
\end{equation}
where $\O\subset\R^N$, $N\geq 1$, is a bounded and regular domain,
$$
 \|u\|^2:=\int_\O |\n u|^2dx,
$$
and $M$ is a continuous function verifying
$$
M:[0,+\infty)\mapsto[0,+\infty)\quad\mbox{and  $\exists m_0>0$ such that $M(t)\geq m_0>0\;\forall t\in\R$},
\leqno(M_0)
$$
and $f\in C(\ov\O\times\R)$. We assume $(M_0)$ along the paper.

To study this problem different methods have been used, mainly  variational methods and  fixed point arguments, and also bifurcation and sub-supersolution.

In this note, we have two main objectives. On one hand, we present some examples demonstrating the some comparison results appearing in the literature are not correct. On the other hand, we prove a sub-supersolution method that includes the above ones, remembered in this work.

An outline of the paper is as follows:
In Section 2 we recall the previous results related to comparison and sub-supersolution method, and we present our main result. In Section 3 we give  some counterexamples showing that some comparison results are not correct. Section 4 is devoted to prove our main result and in Section 5 we apply our result to some specific examples.

\setcounter{equation}{0}
\section{Previous and main results}

In our knowledge, there are basically  three results concerning  to the comparison and sub-supersolution results related to (\ref{uno}). Let us recall them. In \cite{julioclaudianor1} (Theorems 2 and 3) the following result was proved:
\begin{teorema}
\label{teo1}
Assume that:
\begin{enumerate}
\item[$(M_1)$]  $M$ is non-increasing in $[0,+\infty)$.
\item[$(H)$] Define the function,
$$
H(t):=M(t^2)t
$$
and assume that $H$ is increasing and $H(\R)=\R$.
\end{enumerate}
\begin{enumerate}
\item If there exist two non-negative functions $\un u,\ov u\in C^2(\ov\O)$ such that $\un u=\ov u=0$ on $\p\O$ and
\begin{equation}
\label{compara}
-M(\|\ov u\|^2)\D\ov u\geq -M(\|\un u\|^2)\D\un u\quad\mbox{in $\O$,}
\end{equation}
then (comparison principle)
$$
\un u\leq \ov u \quad\mbox{in $\ov\O$.}
$$
\item If
\begin{enumerate}
\item[$(f_1)$] $f$ is increasing in the variable $u$ for each $x\in\O$ fixed,
\end{enumerate}
and there exist two regular functions $0\leq \un u\leq \ov u$ in $\O$, $\un u=\ov u=0$ on $\p\O$ verifying
$$
-M(\|\ov u\|^2)\D\ov u\geq f(x,\ov u),\quad -M(\|\un u\|^2)\D\un u\leq f(x,\un u),\quad\mbox{in $\O$.}
$$
then (sub-supersolution method), there exists a solution $u$ of (\ref{uno}) such that $\un u\leq u\leq \ov u$ in $\O$.
\end{enumerate}
\end{teorema}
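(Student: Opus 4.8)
The plan is to treat the two assertions in turn, exploiting the key structural fact that once $\un u$ and $\ov u$ are frozen the coefficients $M(\|\ov u\|^2)$ and $M(\|\un u\|^2)$ are mere positive \emph{constants} (positive by $(M_0)$), so that the inequality (\ref{compara}) becomes a constant-coefficient differential inequality to which the classical maximum principle applies. Throughout I abbreviate $a:=M(\|\ov u\|^2)$ and $b:=M(\|\un u\|^2)$, and write $\langle u,v\rangle:=\int_\O\n u\cdot\n v\,dx$ for the $H^1_0(\O)$ inner product.

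For the comparison principle (part 1), I would first rewrite (\ref{compara}) as $-\D(a\ov u-b\un u)\ge0$; since $a\ov u-b\un u$ vanishes on $\p\O$, the weak maximum principle gives the pointwise bound
$$a\,\ov u\ge b\,\un u\quad\text{in }\ov\O.$$
This is not the conclusion, because $a\ne b$ in general, and the real work is to prove $\|\ov u\|\ge\|\un u\|$: combined with $(M_1)$ this forces $a\le b$, and then the displayed bound yields $\ov u\ge(b/a)\un u\ge\un u$. To obtain the norm inequality I would test (\ref{compara}) against the admissible nonnegative functions $\ov u$ and $\un u$, producing
$$a\|\ov u\|^2\ge b\langle\ov u,\un u\rangle,\qquad a\langle\ov u,\un u\rangle\ge b\|\un u\|^2.$$
The degenerate case $\langle\ov u,\un u\rangle\le0$ forces $\un u\equiv0$ through the second inequality and is trivial; otherwise multiplying the two inequalities and cancelling $\langle\ov u,\un u\rangle>0$ gives $a\|\ov u\|\ge b\|\un u\|$, that is $H(\|\ov u\|)\ge H(\|\un u\|)$, whence $\|\ov u\|\ge\|\un u\|$ by the monotonicity of $H$ in $(H)$. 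This is precisely the point where both $(M_1)$ and $(H)$ are indispensable.

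For the sub-supersolution method (part 2) I would run a monotone iteration anchored on the comparison principle just established. The key auxiliary fact is that, for any fixed right-hand side $h\in C(\ov\O)$, the problem $-M(\|u\|^2)\D u=h$ with $u=0$ on $\p\O$ has a unique solution: solving $-\D w=h$ and writing $u=(t/\|w\|)w$, the scalar $t=\|u\|$ must satisfy $H(t)=\|w\|$, which is uniquely solvable exactly because $(H)$ makes $H$ an increasing bijection of $[0,+\infty)$. Denoting this solution operator by $S$, I would set $u_0=\un u$ and $u_{n+1}=S(f(\cdot,u_n))$, and prove by induction that $\un u\le u_n\le u_{n+1}\le\ov u$. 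The only delicate point is that the comparison principle requires the iterates to be nonnegative: this is recovered \emph{before} comparison is invoked, since the subsolution inequality together with $(f_1)$ gives $-\D w_{n+1}=f(\cdot,u_n)\ge f(\cdot,\un u)\ge-\D(b\,\un u)$, so the maximum principle yields $w_{n+1}\ge b\,\un u\ge0$ and hence $u_{n+1}\ge0$. Once nonnegativity is in hand, $(f_1)$, the sub/supersolution inequalities and part 1 deliver the monotonicity and the two-sided bound.

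Finally I would pass to the limit. The sequence is monotone and order-bounded, hence converges pointwise to some $u$ with $\un u\le u\le\ov u$; since the right-hand sides $f(\cdot,u_n)$ are uniformly bounded in $L^\infty$, elliptic regularity gives a uniform $W^{2,p}$ bound and therefore strong convergence in $H^1_0(\O)$ (indeed in $C^1(\ov\O)$). Securing this strong convergence is what I expect to be the main obstacle, because it is exactly what is needed to pass to the limit in the nonlocal factor: once $\|u_n\|\to\|u\|$, continuity of $M$ and of $f$ lets me conclude $-M(\|u\|^2)\D u=f(\cdot,u)$, so that $u$ is the desired solution of (\ref{uno}) lying in the order interval $[\un u,\ov u]$.
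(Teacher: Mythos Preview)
Your argument is correct and self-contained, but it follows a genuinely different route from the paper. The paper does not prove Theorem~\ref{teo1} from scratch: part~(a) is simply quoted from \cite{julioclaudianor1}, and part~(b) is recovered as a corollary of the paper's main result, Theorem~\ref{teo4}. Concretely, the paper first checks that $(M_1)$ together with $(H)$ forces $G(t)=M(t)t$ to be increasing (so that $(M_3)$ holds), and then verifies that any sub/supersolution pair in the sense of Theorem~\ref{teo1} is automatically a sub/supersolution pair in the sense of Theorem~\ref{teo4}; existence of $u\in[\un u,\ov u]$ then follows from Theorem~\ref{teo4}, whose own proof rewrites (\ref{uno}) as the nonlocal problem (\ref{dos}) and invokes the sub-supersolution result of \cite{julionosotrosADE}. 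Your route is more elementary and internal: you establish the comparison principle directly by testing (\ref{compara}) against $\un u$ and $\ov u$ and using the monotonicity of $H$ to deduce $\|\ov u\|\ge\|\un u\|$, and then you run a classical monotone iteration driven by the solution operator $S$ that the bijectivity of $H$ provides. The gain of your approach is that it is direct and does not depend on Theorem~\ref{teo4} or on the external reference \cite{julionosotrosADE}; the gain of the paper's approach is that it places Theorem~\ref{teo1}(b) inside a single framework that simultaneously subsumes Theorem~\ref{teo3} and drives the applications of Section~5.
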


In \cite{mdg} (Theorems 3.2 and 3.3), se also \cite{chinos}, the case when $M$ is increasing was studied.  The authors proved a similar result to Theorem \ref{teo1}:
\begin{teorema}
\label{teo2}
Assume that:
\begin{enumerate}
\item[$(M_2)$]  $M$ is increasing.
\end{enumerate}
Then, the comparison principle holds. Moreover, if $f$ verifies $(f_1)$, the sub-supersolution method also works.
%If there exist two regular function such that
%$$
%-M(\|\ov u\|^2)\D\ov u\geq -M(\|\un u\|^2)\D\un u
%$$
%then (comparison principle)
%$$
%\un u\leq \ov u.
%$$
%Moreover, if
%\begin{enumerate}
%\item[$(H_f)$] $f$ is non-decreasing in $u$,
%\end{enumerate}
%and there exist two functions $\un u\leq \ov u$ in $\O$, $\un u=\ov u=0$ on $\p\O$ verifying
%$$
%-M(\|\ov u\|^2)\D\ov u\geq f(x,\ov u),\quad -M(\|\un u\|^2)\D\un u\leq f(x,\un u),
%$$
%then, there exists a solution $u$ of (\ref{uno}) such that $\un u\leq u\leq \ov u$ in $\O$.
\end{teorema}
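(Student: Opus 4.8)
The plan is to mirror the proof of Theorem \ref{teo1}, reducing both assertions to the weak maximum principle for $-\D$ by freezing the nonlocal coefficients. Throughout set $A:=M(\|\ov u\|^2)$ and $B:=M(\|\un u\|^2)$, which are strictly positive constants by $(M_0)$, and observe that under $(M_2)$ the auxiliary function $H(t):=M(t^2)t$ (the same one appearing in $(H)$) is continuous and strictly increasing on $[0,+\infty)$, with $H(0)=0$ and $H(t)\ge m_0 t\to+\infty$; hence $H$ is a bijection of $[0,+\infty)$ onto itself without any further hypothesis.

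For the comparison principle, the first step is to exploit that $A$ and $B$ are constants. The hypothesis (\ref{compara}) then reads $-\D(A\ov u-B\un u)\ge 0$ in $\O$, while $A\ov u-B\un u=0$ on $\p\O$ since both functions vanish there. The weak maximum principle for the Laplacian gives $A\ov u-B\un u\ge 0$, that is
\begin{equation}
\ov u\ge \frac{B}{A}\,\un u\ge 0\quad\mbox{in }\ov\O. \label{prop-key}
\end{equation}
If one can show $A\le B$, equivalently (as $M$ is increasing) $\|\ov u\|\le\|\un u\|$, then $B/A\ge1$ and (\ref{prop-key}) yields at once the desired conclusion $\un u\le\ov u$ in $\ov\O$.

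The crux is therefore to control the ratio $B/A$, and this is the step I expect to be the main obstacle. The natural device is an energy estimate: testing (\ref{compara}) against $\un u\ge 0$ and integrating by parts gives $A\int_\O\n\ov u\cdot\n\un u\,dx\ge B\|\un u\|^2$, whence by Cauchy--Schwarz $A\|\ov u\|\,\|\un u\|\ge B\|\un u\|^2$, i.e. $H(\|\ov u\|)\ge H(\|\un u\|)$ and thus $\|\ov u\|\ge\|\un u\|$. Unfortunately this is precisely the opposite inequality to the one required: it yields $A\ge B$, so the coefficient in (\ref{prop-key}) satisfies $B/A\le1$ and the frozen-coefficient argument stalls. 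The hard part is thus to supply an additional mechanism forcing $B/A\ge1$ in (\ref{prop-key}); identifying whether such a mechanism is genuinely available is the heart of the matter, and is where I would concentrate all the effort.

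For the sub-supersolution method I would, assuming the comparison principle in hand, set up a monotone iteration on the order interval $[\un u,\ov u]$. Given $w\in[\un u,\ov u]$, the frozen problem $-M(\|u\|^2)\D u=f(x,w)$ in $\O$ with $u=0$ on $\p\O$ is uniquely solvable: writing $v:=(-\D)^{-1}f(\cdot,w)$, any solution obeys $u=v/M(\|u\|^2)$, hence $H(\|u\|)=\|v\|$, which determines $\|u\|=H^{-1}(\|v\|)$ and then $u$ itself; set $Tw:=u$. Using $(f_1)$ together with the comparison principle one verifies that $T$ is monotone and maps $[\un u,\ov u]$ into itself: for instance $f(x,\un u)\le f(x,w)$ forces $-M(\|Tw\|^2)\D(Tw)\ge -M(\|\un u\|^2)\D\un u$, and comparison yields $Tw\ge\un u$, symmetrically $Tw\le\ov u$. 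Elliptic regularity makes $T$ compact and continuous, so Schauder's theorem (or monotone iteration between $\un u$ and $\ov u$) produces a fixed point $u=Tu$, a solution of (\ref{uno}) with $\un u\le u\le\ov u$. Since every structural step here, namely the invariance of $[\un u,\ov u]$ and the monotonicity of $T$, rests on the comparison principle, the entire argument hinges on resolving the obstruction identified above.
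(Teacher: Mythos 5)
Your instinct that the missing mechanism is ``the heart of the matter'' is exactly right, but the conclusion to draw is the opposite of what your plan hopes for: no such mechanism exists, because Theorem \ref{teo2} is false. The paper does not prove this statement at all --- it quotes it from \cite{mdg} precisely in order to refute it, and Section 3 constructs the counterexample. Take $\O=(0,\pi)$, $\un u=\sin x$, $\ov u=\rho x(\pi-x)$ with $0<\rho<\rho^*=4/\pi^2$, so that $\un u\nleq\ov u$. Since $-\D\un u=\sin x\leq 1$, $-\D\ov u=2\rho$, $\|\un u\|^2=\pi/2$ and $\|\ov u\|^2=\rho^2\pi^3/3$, hypothesis (\ref{compara}) reduces to the single scalar inequality (\ref{condi}), namely $M(\pi/2)\leq 2\rho\, M(\rho^2\pi^3/3)$. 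The decisive point --- and it is exactly the one your energy estimate uncovered --- is that for $\rho$ close to $\rho^*$ one has $\|\ov u\|^2=\rho^2\pi^3/3>\pi/2=\|\un u\|^2$ even though $\un u\nleq\ov u$: ordering of gradient norms does not imply pointwise ordering. Because $M$ is increasing, $M(\|\ov u\|^2)$ can then be made arbitrarily larger than $M(\|\un u\|^2)$ (take $M(t)=a+bt^p$ with $b$ and $p$ large), which compensates the factor $2\rho<1$ and makes (\ref{condi}) hold while the conclusion $\un u\leq\ov u$ fails. Your two correct computations --- the frozen-coefficient maximum principle giving $\ov u\geq (B/A)\un u$, and the energy estimate giving $H(\|\ov u\|)\geq H(\|\un u\|)$, hence $B/A\leq 1$ --- are perfectly consistent with this example: in it, $B/A$ is tiny, so $\ov u\geq (B/A)\un u$ carries no pointwise information. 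For an increasing $M$, hypothesis (\ref{compara}) rewards a large gradient norm of $\ov u$ rather than pointwise largeness, and that is irreparable.

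The second assertion collapses with the first: your iteration scheme invokes the comparison principle at every step (invariance of $[\un u,\ov u]$ and monotonicity of $T$), so it cannot be repaired either; the paper asserts that both principles fail under $(M_2)$, and notes in the Remark of Section 3 that this invalidates several applications in the literature. What replaces Theorem \ref{teo2} is Theorem \ref{teo4}: there the non-local coefficient is frozen at $M({\cal R}(w))$ uniformly over $w\in[\un u,\ov u]$, where ${\cal R}$ is built from the inverse of $G(t)=M(t)t$; multiplying (\ref{uno}) by $u$ and integrating shows $\|u\|^2={\cal R}(u)$ for any solution, which converts (\ref{uno}) into the gradient-free non-local problem (\ref{dos}), where a genuine sub-supersolution theorem is available. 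If you want a positive statement in the spirit of what you attempted, that is the formulation to prove.
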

Finally, in \cite{julioclaudianor2} the following result is shown:
\begin{teorema}
\label{teo3}
Assume that $M$ verifies $(M_2)$ and
\begin{enumerate}
%\item[$(M_2)$]  $M$ is increasing.
\item[$(f_2)$] $f$ is a positive function.
\end{enumerate}
If there exist  $\ov u\in W^{1,\infty}(\Omega)$, $\ov u\geq 0$ on $\p\O$, and a family $(\un u_\delta)\subset W_0^{1,\infty}(\Omega)$ such that
$$
-m_0\D\ov u\geq f(x,\ov u),
$$
$\|\un u_\delta\|_{1,\infty}\to 0$ as $\delta\to 0$, $\un u_\d\leq\ov u$ in $\O$ for $\d$ small enough, and given $\a>0$, there is $\d_0$ such that
$$
-\D\un u_\d\leq \frac{1}{\a}f(x,\un u_\d),\quad\mbox{for $\d\leq \d_0$,}
$$
then, there is a small enough $\d>0$ such that there exists a solution $u$ of (\ref{uno}) such that $\un u_\d\leq u\leq \ov u$ in $\O$.
\end{teorema}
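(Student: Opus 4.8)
The plan is to dissolve the nonlocality by \emph{freezing} the value $t=\|u\|^2$ and then recovering the correct $t$ through a one-dimensional fixed point argument, using $(M_2)$ and $(f_2)$ only to rescale the two differential inequalities in the hypotheses. The first step is an a priori bound. Any function $u$ lying between $\un u_\d$ and $\ov u$ is bounded in $L^\infty(\O)$ by a constant depending only on $\ov u$ and on $\|\un u_\d\|_{1,\infty}$; if moreover $u=0$ on $\p\O$ and $-\D u=\frac{1}{M(t)}f(x,u)$, testing with $u$ and using $(M_0)$ with the positivity $(f_2)$ gives
\begin{equation*}
\|u\|^2=\frac{1}{M(t)}\int_\O f(x,u)\,u\,dx\leq \frac{1}{m_0}\,F\,\|u\|_{L^\infty(\O)}\,|\O|=:R,
\end{equation*}
where $F$ bounds $f$ on the relevant range. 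The point is that $R$ depends only on the data, not on $t$ nor on $\d$ for $\d$ small, so it can be fixed once and for all.

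With $R$ fixed I would set $\a:=M(R)$, let $\d_0$ be the corresponding threshold supplied by the hypotheses, and fix any $\d\leq\d_0$ small enough that $\un u_\d\leq\ov u$. For each frozen parameter $t\in[0,R]$ I consider the local problem
\begin{equation*}
-\D u=\frac{1}{M(t)}f(x,u)\quad\mbox{in }\O,\qquad u=0\quad\mbox{on }\p\O.
\end{equation*}
Here $(M_2)$ is used decisively: since $M$ is increasing, $m_0\leq M(t)\leq M(R)=\a$ on $[0,R]$, and since $f\geq 0$ this lets me reorient the given inequalities. Indeed $-m_0\D\ov u\geq f(x,\ov u)$ gives $-\D\ov u\geq\frac{1}{M(t)}f(x,\ov u)$, so $\ov u$ is a supersolution, while $-\D\un u_\d\leq\frac{1}{\a}f(x,\un u_\d)\leq\frac{1}{M(t)}f(x,\un u_\d)$ shows $\un u_\d$ is a subsolution, and this holds for \emph{every} $t\in[0,R]$ at once. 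Because the frozen problem is local, the classical sub-supersolution theorem (which needs no monotonicity of $f$, only a shifted monotone iteration) produces a solution $u_t$ with $\un u_\d\leq u_t\leq\ov u$, and I would select, say, the minimal one.

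I then define $\Phi(t):=\|u_t\|^2$. By the a priori bound $\Phi$ maps $[0,R]$ into $[0,R]$, and any fixed point $t^*=\Phi(t^*)$ yields a genuine solution of (\ref{uno}), since then $M(\|u_{t^*}\|^2)=M(t^*)$ converts the frozen equation back into the Kirchhoff equation. Writing $g(t):=\Phi(t)-t$, one has $g(0)=\|u_0\|^2\geq 0$ (in fact $>0$ as $f>0$) and $g(R)=\|u_R\|^2-R\leq 0$, so once $\Phi$ is shown continuous the intermediate value theorem produces the desired $t^*$ and the proof is complete.

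The main obstacle is precisely this continuity of $\Phi$, which is delicate because the frozen problems need not have a unique solution in $[\un u_\d,\ov u]$. I would attack it by compactness: the family $\{u_t\}$ is uniformly bounded in $L^\infty(\O)$, so the right-hand sides $\frac{1}{M(t)}f(x,u_t)$ are uniformly bounded, whence by elliptic $L^p$ regularity $\{u_t\}$ is bounded in $W^{2,p}(\O)$ for every $p$ and precompact in $C^1(\ov\O)$. Along $t_n\to t$ any limit of $u_{t_n}$ solves the frozen problem at $t$ and lies in $[\un u_\d,\ov u]$; the genuinely technical step is to identify this limit with the selected solution $u_t$. Here I would exploit the monotone dependence of the source on $t$ coming from $(M_2)$ and $(f_2)$, namely $\frac{1}{M(t)}$ decreasing and $f\geq 0$, which makes the minimal branch $t\mapsto u_t$ nonincreasing and right-continuous, so that $u_{t_n}\to u_t$ in $H_0^1(\O)$ and hence $\Phi(t_n)\to\Phi(t)$. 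Securing enough continuity of this selection to legitimately run the one-dimensional fixed point argument is the crux; everything else is a rescaling of the hypotheses followed by the intermediate value theorem.
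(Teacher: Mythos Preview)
Your freezing-plus-intermediate-value strategy is a reasonable line of attack, but it is \emph{not} the route taken in the paper, and the very step you flag as ``the crux'' is a genuine gap that the paper avoids entirely.

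The paper does not freeze $t=\|u\|^2$. Instead it multiplies (\ref{uno}) by $u$ and integrates to obtain $G(\|u\|^2)=M(\|u\|^2)\|u\|^2=\int_\O f(x,u)u\,dx$. Since $(M_2)$ makes $G(t)=M(t)t$ strictly increasing (this is condition $(M_3)$), one can invert and write $\|u\|^2={\cal R}(u):=G^{-1}\bigl(\int_\O f(x,u)u\bigr)$. Thus (\ref{uno}) is \emph{equivalent} to the nonlocal problem $-\Delta u=f(x,u)/M({\cal R}(u))$, whose nonlocality sits in a zero-order integral functional rather than in $\|u\|$; for such equations a sub-supersolution theorem is already available (Theorem~3.2 of \cite{julionosotrosADE}), and this is the paper's Theorem~\ref{teo4}. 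To deduce Theorem~\ref{teo3} one simply checks that $\ov u$ and some $\un u_\delta$ satisfy the hypotheses of Theorem~\ref{teo4}: for $\ov u$ this is immediate from $m_0\le M({\cal R}(w))$, and for $\un u_\delta$ one sets $\alpha:=\max_{0\le w\le\ov u}M({\cal R}(w))$ and uses $f\ge 0$. No fixed-point or continuity argument in a scalar parameter is needed.

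By contrast, your scheme hinges on the continuity of $\Phi(t)=\|u_t\|^2$, and the argument you sketch does not close. The monotonicity you obtain is for $t\mapsto u_t$ in the \emph{pointwise} order, but $\|\cdot\|_{H_0^1}^2$ does not respect pointwise order, so $\Phi$ itself need not be monotone. What you can extract is one-sided (right-) continuity of $\Phi$ along the minimal branch, and that alone is insufficient for the intermediate value theorem: $g(t)=\Phi(t)-t$ may jump over $0$. One can try to pair the minimal and maximal selections, or replace the IVT by a degree/Schauder argument on a suitable solution map, but this requires additional work that your proposal does not supply. The paper's transformation sidesteps the issue completely because the sub- and supersolution inequalities of Theorem~\ref{teo4} are required to hold for \emph{every} $w\in[\un u,\ov u]$, so no selection or continuity question arises.
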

Of course, the above inequalities are considered in the weak sense.

Our main result reads as follows:
\begin{teorema}
\label{teo4}
Assume that:
\begin{enumerate}
\item[$(M_3)$]  $G(t):=M(t)t$ is invertible and denote by $R(t):=G^{-1}(t).$
\end{enumerate}
Define now the non-local operator
$$
{\cal R}(w):=R\left(\int_\O f(x,w)wdx\right).
$$
If there exist $\un u,\ov u\in H^1(\O)\cap L^\infty(\O)$ such that  $\un u\leq \ov u$ in $\O$, $\un u \leq \ov u$ on $\p\O$ verifying
$$
-M({\cal R}(w))\D\ov u\geq f(x,\ov u),\quad -M({\cal R}(w))\D\un u\leq f(x,\un u),\quad\forall w \in [\un u,\ov u],
$$
then, there exists a solution $u$ of (\ref{uno}) such that $\un u\leq u\leq \ov u$ in $\O$.
\end{teorema}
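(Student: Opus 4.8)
The plan is to exploit an identity that every solution of (\ref{uno}) is forced to satisfy, which converts the nonlocal coefficient into a quantity that can be frozen. Indeed, if $u\in H^1(\O)\cap L^\infty(\O)$ solves (\ref{uno}) with $u=0$ on $\p\O$, then testing the equation with $u$ and integrating by parts gives $M(\|u\|^2)\|u\|^2=\int_\O f(x,u)u\,dx$. Since $G(t)=M(t)t$, this reads $G(\|u\|^2)=\int_\O f(x,u)u\,dx$, and applying $R=G^{-1}$ yields the crucial relation $\|u\|^2={\cal R}(u)$. Conversely, if $u$ solves the \emph{modified} problem
$$-M({\cal R}(u))\D u=f(x,u)\quad\mbox{in $\O$,}\qquad u=0\quad\mbox{on $\p\O$,}$$
the same computation gives $M({\cal R}(u))\|u\|^2=\int_\O f(x,u)u\,dx=G({\cal R}(u))=M({\cal R}(u)){\cal R}(u)$, and since $M\geq m_0>0$ one again obtains $\|u\|^2={\cal R}(u)$, whence $M({\cal R}(u))=M(\|u\|^2)$ and $u$ solves (\ref{uno}). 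Thus it suffices to produce a solution of the modified problem inside $[\un u,\ov u]$.

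To this end I would set up a fixed point on the order interval $K:=\{w\in L^2(\O):\un u\leq w\leq\ov u\ \mbox{a.e.}\}$, which is closed, bounded and convex. Fix $k>0$ large enough that $s\mapsto f(x,s)+ks$ is nondecreasing on the range $[\inf\un u,\sup\ov u]$ (if $f$ is merely continuous one replaces this shift by the usual truncation of the nonlinearity and tests with $(u-\ov u)^+$), and for $w\in K$ let $u=T(w)$ be the unique solution of the \emph{linear} problem
$$-M({\cal R}(w))\D u+ku=f(x,w)+kw\quad\mbox{in $\O$,}\qquad u=0\quad\mbox{on $\p\O$.}$$
Here the coefficient $M({\cal R}(w))\geq m_0>0$ and the right-hand side depend only on the frozen $w$, so $T$ is well defined, and a fixed point $u=T(u)$ satisfies $-M({\cal R}(u))\D u=f(x,u)$, i.e. it solves the modified problem.

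Next I would verify that $T$ is admissible for a topological fixed point theorem. Continuity of $w\mapsto{\cal R}(w)$ follows from the continuity of $w\mapsto\int_\O f(x,w)w\,dx$ (dominated convergence, using $f\in C(\ov\O\times\R)$ and the uniform bound $|w|\leq\max\{\|\un u\|_\infty,\|\ov u\|_\infty\}$) together with the continuity of $R=G^{-1}$; note that $G$ continuous and invertible on $[0,+\infty)$ is strictly monotone, so $R$ is continuous. Hence $w\mapsto M({\cal R}(w))$ takes values in a compact subinterval of $(0,+\infty)$ and depends continuously on $w$; since the linear solution operator depends continuously on its coefficient and right-hand side, and elliptic regularity places $T(K)$ in a bounded subset of $C^{1,\a}(\ov\O)$, the map $T:K\to K$ is continuous and compact (the embedding $C^{1,\a}(\ov\O)\hookrightarrow L^2(\O)$ being compact). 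The invariance $T(K)\subset K$ is where the hypotheses enter: for $w\in K$ the choice of $k$ and $w\leq\ov u$ give $f(x,w)+kw\leq f(x,\ov u)+k\ov u$, while the supersolution inequality, valid for \emph{every} $w\in[\un u,\ov u]$, gives $-M({\cal R}(w))\D\ov u+k\ov u\geq f(x,\ov u)+k\ov u$; subtracting and using $u=0\leq\ov u$ on $\p\O$, the weak maximum principle for $-M({\cal R}(w))\D\,\cdot\,+k(\cdot)$ yields $u\leq\ov u$, and symmetrically $\un u\leq u$.

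The main obstacle is precisely that the nonlocal coefficient $M({\cal R}(w))$ varies with $w$: because the differential operator itself changes as $w$ moves, the map $T$ is \emph{not} order preserving, so the classical monotone iteration between $\un u$ and $\ov u$ need be neither well defined nor convergent. I would therefore abandon monotone iteration and invoke Schauder's fixed point theorem, which requires only that $T$ be a continuous compact self-map of the closed bounded convex set $K$ — all of which have been arranged above, the uniform-in-$w$ sub- and supersolution inequalities supplying exactly the invariance needed. The resulting fixed point $u\in K$ solves the modified problem, and hence, by the identity $\|u\|^2={\cal R}(u)$, solves (\ref{uno}) with $\un u\leq u\leq\ov u$ in $\O$.
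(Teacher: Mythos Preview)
Your argument is correct and follows the same central idea as the paper: both reduce (\ref{uno}) to the modified non-local problem $-M({\cal R}(u))\Delta u=f(x,u)$ via the identity $\|u\|^2={\cal R}(u)$ obtained by testing the equation with $u$ and inverting $G$. The paper then simply invokes Theorem~3.2 of \cite{julionosotrosADE} (a ready-made sub-supersolution result for non-local equations of this shape), whereas you spell out the underlying Schauder fixed-point argument on the order interval $[\un u,\ov u]$; your version is therefore self-contained but otherwise identical in strategy, and your observation that monotone iteration fails because the operator changes with $w$ is precisely why a topological, rather than order-theoretic, fixed-point theorem is needed.
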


In Section 3, we show that Theorem \ref{teo2} is not correct.  Now, we deduce parts b) of Theorems \ref{teo1} and \ref{teo3} from Theorem \ref{teo4}.

\noindent\underline{Theorem \ref{teo4} implies Theorem \ref{teo1} b):} Assume that we have the hypotheses of Theorem \ref{teo1}. Observe that if $M$ is non-increasing and $H$ increasing, and assuming regularity on the functions,  then $G$ is increasing. Indeed, using that $M'\leq 0$ we get that
$$
G'(t)=M'(t)t+M(t)\geq 2tM'(t)+M(t)=H'(t^{1/2})>0,\quad t>0,
$$
and then, $w\mapsto {\cal R}(w) $ is increasing because $f$ is also increasing.

Consider now that $\un u$, $\ov u$ is sub-supersolution in the sense of Theorem \ref{teo1}, we are going to show that they are also sub-super in the sense of Theorem \ref{teo4}. We show this fact with $\un u$, for $\ov u$ we can apply an analogous reasoning. Since $f$ is increasing, we have that
$$
M(\|\un u\|^2)\|\un u\|^2\leq \int_\O f(x,\un u)\un u\leq \int_\O f(x,w)w\quad\forall w\in [\un u,\ov u],
$$
and so,
$$
\|\un u\|^2\leq {\cal R}(w)\Longrightarrow M(\|\un u\|^2)\geq M({\cal R}(w))\quad\forall w\in [\un u,\ov u].
$$
Hence,
$$
-\D \un u\leq \frac{f(x,\un u)}{M(\|\un u\|^2)}\leq  \frac{f(x,\un u)}{M({\cal R}(w))}\quad\forall w\in [\un u,\ov u].
$$
Then, $(\un u,\ov u)$ verifies the hypotheses of Theorem \ref{teo4} and we can conclude the existence of a solution $u$ of (\ref{uno}) and $u\in [\un u,\ov u]$.

\noindent\underline{Theorem \ref{teo4} implies Theorem \ref{teo3} b):} Indeed, assume that we have the hypotheses of Theorem \ref{teo3}. Observe that if $M$ is increasing,  then $G$ is increasing. Assume now the existence of a supersolution  $\ov u$ and family of sub-solution $\un u_\d$ in the sense of Theorem \ref{teo3}. Then,
$$
-\D \ov u\geq\frac{f(x,\ov u)}{m_0}\geq \frac{f(x,\ov u)}{M({\cal R}(w))}\quad\forall w\in [\un u,\ov u].
$$

Consider now
$$
\a=\max_{0\leq w\leq \ov u}M({\cal R}(w)),
$$
and take $\un u=\un u_\d$ for some $\d\leq \d_0$ given by Theorem \ref{teo3}. Then, using that $f\geq 0$,
$$
-\D \un u\leq \frac{1}{\a}f(x,\un u)\leq \frac{1}{M({\cal R}(w))}f(x,\un u)\quad\forall w\in [\un u,\ov u],
$$
and so,  $\un u$, $\ov u$ is sub-supersolution in the sense of Theorem \ref{teo4}.

\setcounter{equation}{0}
\section{Counterexamples}
In this section we have two objectives: when $M$ verifies $(M_2)$  or $M$ verifies $(M_1)$ and not $(M_3)$, the comparison principle fails.

For that consider
$$
\O=(0,\pi),\quad \un u:=\sin(x),\quad\ov u:=\rho x(\pi-x),\; \rho>0.
$$
Observe that
$$
\max_{x\in[0,\pi]}\frac{\un u(x)}{\ov u(x)}=\max_{x\in[0,\pi]}\frac{\sin(x)}{x(\pi-x)}=\frac{4}{\pi^2}:=\rho^*\simeq 0.4083
$$
Hence, for $\rho<\rho^*$ we have that $\un u\nleq \ov u$ in $\O$. On the other hand
$$
\|\un u\|^2=\frac{\pi}{2},\qquad\|\ov u\|^2=\rho^2\frac{\pi^3}{3}.
$$
So,
$$
-M(\|\un u\|^2)\D\un u\leq -M(\|\ov u\|^2)\D\ov u\quad\mbox{for all $x\in (0,\pi)$},
$$
 if and only if
 \begin{equation}
 \label{condi}
 M(\pi/2)\leq 2\rho M\left(\rho^2\frac{\pi^3}{3}\right).
 \end{equation}

Consider
$$
M(t):=a+b(t+c)^p,\quad a,c\geq 0, b>0,p\in \R.
$$
\underline{Case 1: $M$ is increasing.} Consider in this case $c=0$, $a>0$ and $p>0$. Then, (\ref{condi}) is equivalent to
$$
a+b\left(\frac{\pi}{2}\right)^p\leq 2\rho \left(a+b\left(\rho^2\frac{\pi^3}{3}\right)^p\right).
$$
Taking $b$ large, we need that for some $\rho<\rho^*$
$$
\frac{1}{2}\frac{1}{\left(\pi^2\frac{2}{3}\right)^p}<\rho^{1+2p}.
$$
By continuity, it is enough that the above inequality holds for $\rho=\rho^*$, that is,
$$
\frac{1}{2}\frac{1}{\left(\pi^2\frac{2}{3}\right)^p}<\left(\frac{4}{\pi^2}\right)^{1+2p}\Longleftrightarrow 1<\frac{8}{\pi^2}\left(\frac{32}{3\pi^2}\right)^p,
$$
which is true for $p$ large.

\noindent\underline{Case 2: $M$ is decreasing.} Take in this case $a,c>0$ and $p<0$. In this case, we need that
$$
a(1-2\rho)\leq b\left(2\rho\left(\rho^2\frac{\pi^3}{3}+c\right)^p-\left(\frac{\pi}{2}+c\right)^p\right),
$$
for that we need, taking $b$ large, that
$$
2\rho\left(\rho^2\frac{\pi^3}{3}+c\right)^p>\left(\frac{\pi}{2}+c\right)^p\Longleftrightarrow 2\rho>\left(\frac{\frac{\pi}{2}+c}{\rho^2\frac{\pi^3}{3}+c}\right)^p.
$$
Take $\rho$ small such that
$$
\frac{\frac{\pi}{2}+c}{\rho^2\frac{\pi^3}{3}+c}>1.
$$
Now, we have the above inequality taking $p$ very negative.
\begin{nota}
The above example shows that Theorem  3.3 in \cite{mdg},  see also Theorem 2.3 in \cite{hd}, seems not correct. Hence, some other papers where these results have been applied are also not correct, see for instance, \cite{lei}, \cite{liu}, \cite{chen}, \cite{sun}, \cite{afrouzi}, \cite{ngu}.
\end{nota}
\setcounter{equation}{0}
\section{Proof of Theorem \ref{teo4}:}
First, we are going to transforms our equation (\ref{uno})  into another non-local elliptic equation. Indeed,
multiplying (\ref{uno}) by $u$ and integrating, we get
$$
M(\|u\|^2)\|u\|^2=\int_\O f(x,u)udx.
$$
By $(H_3)$, $G$ is invertible, and so
$$
\|u\|^2=R\left(\int_\O f(x,u)u dx\right)={\cal R}(u).
$$
Then, (\ref{uno}) is equivalent to problem
\begin{equation} \label{dos}
\left \{ \begin{array}{ll}
-\D u=\displaystyle\frac{f(x,u)}{M({\cal R}(u))} & \mbox{in $\O$,}\\
u=0 & \mbox{on $\p\O$.}
\end{array}\right.
\end{equation}
Observe that (\ref{dos}) is a non-local elliptic equation, without terms in $\|u\|$, and so it suffices to apply Theorem 3.2 in \cite{julionosotrosADE}. This completes the proof.

In the following result, we prove a specific comparison principle which is valid when $M$ verifies only hypothesis $(H)$. Define $e$ the unique positive solution of the equation
\begin{equation} \label{e}
\left \{ \begin{array}{ll}
-\D e=1 & \mbox{in $\O$,}\\
e=0 & \mbox{on $\p\O$.}
\end{array}\right.
\end{equation}

\begin{lema}
\label{espe}
Assume that $M$ verifies $(H)$ and let $u_i$, $i=1,2$  functions such that
$$
-M(\|u_i\|^2)\D u_i=f_i\in\R_+
$$
and $f_1\leq f_2$. Then, $u_1\leq u_2$ in $\O$.
\end{lema}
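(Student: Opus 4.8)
The plan is to exploit the fact that each $f_i$ is a \emph{constant}, which forces $u_i$ to be a scalar multiple of the torsion function $e$ from (\ref{e}), and then to extract the comparison directly from the monotonicity of $H$. First I would set $t_i:=\|u_i\|^2$. Since $f_i$ is a positive constant and $M(t_i)$ is a positive number, the equation $-M(t_i)\D u_i=f_i$ becomes $-\D u_i=f_i/M(t_i)$ with $u_i=0$ on $\p\O$; by uniqueness for problem (\ref{e}) this identifies $u_i=\frac{f_i}{M(t_i)}\,e$.

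Next I would pin down $t_i$ through a self-consistency relation. Taking the $\|\cdot\|$-norm in the identity above gives $t_i=\left(\frac{f_i}{M(t_i)}\right)^2\|e\|^2$, i.e. $t_iM(t_i)^2=f_i^2\|e\|^2$. The key step is to recognise the left-hand side in terms of $H$: writing $s_i:=\sqrt{t_i}$ and recalling $H(s)=M(s^2)s$, one has $H(s_i)^2=M(t_i)^2t_i=f_i^2\|e\|^2$, and since $(M_0)$ gives $M\geq m_0>0$ (so $H(s_i)\geq 0$), taking square roots yields the clean relation $H(s_i)=f_i\|e\|$.

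Finally I would compare. From $f_1\leq f_2$ and $\|e\|>0$ we obtain $H(s_1)\leq H(s_2)$, and since $H$ is increasing this forces $s_1\leq s_2$, hence $t_1\leq t_2$. Now the multiplier in $u_i=\frac{f_i}{M(t_i)}e$ can be rewritten from $t_iM(t_i)^2=f_i^2\|e\|^2$ as $\frac{f_i}{M(t_i)}=\frac{\sqrt{t_i}}{\|e\|}=\frac{s_i}{\|e\|}$, so that $u_1=\frac{s_1}{\|e\|}e\leq\frac{s_2}{\|e\|}e=u_2$ because $e\geq 0$ in $\O$. This gives $u_1\leq u_2$.

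The only subtle point I anticipate is that $(H)$ alone does not give any monotonicity of $M$, so one cannot argue naively that $f_i/M(t_i)$ grows with $f_i$; the entire difficulty is packaged into the identity $H(\sqrt{t_i})=f_i\|e\|$, after which the injectivity and monotonicity of $H$ make the conclusion immediate. Degenerate cases such as $f_1=0$ are harmless, since then $u_1=0\leq u_2$.

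\final
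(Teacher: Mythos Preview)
Your proof is correct and follows essentially the same route as the paper: identify $u_i$ as a scalar multiple of the torsion function $e$, take norms to obtain $M(\|u_i\|^2)\|u_i\|=f_i\|e\|$, i.e.\ $H(\|u_i\|)=f_i\|e\|$, and conclude from the monotonicity of $H$. The only cosmetic difference is that you pass through the squared relation $t_iM(t_i)^2=f_i^2\|e\|^2$ before taking a square root, whereas the paper takes the norm of $M(\|u_i\|^2)u_i=f_ie$ directly.
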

\begin{proof}
Observe that
\begin{equation}
\label{estrella}
M(\|u_i\|^2)u_i=f_ie,
\end{equation}
and then, $u_1\leq u_2$ if and only if
\begin{equation}
\label{equi1}
\frac{f_1}{M(\|u_1\|^2)}\leq \frac{f_2}{M(\|u_2\|^2)}.
\end{equation}
But observe that form (\ref{estrella})
$$
f_i=\frac{M(\|u_i\|^2)\|u_i\|}{\|e\|},
$$
and then (\ref{equi1}) is equivalent to
\begin{equation}
\label{equi2}
\|u_1\|\leq \|u_2\|.
\end{equation}
Since $M(\|u_i\|^2)\|u_i\|=f_i\|e\|$ and due to $(H)$, it follows (\ref{equi2})
\end{proof}

\setcounter{equation}{0}
\section{Applications}
In this section we apply our result to some models. We only  assume that $M$ verifies  $(M_3)$.  Denote by $\l_1>0$ the principal eigenvalue of the Laplacian and $\v_1>0$ the eigenfunction associated to it with $\|\v_1\|_\infty=1$.

\noindent\underline{Example 1:} Consider the equation
\begin{equation} \label{ex1}
\left \{ \begin{array}{ll}
-M(\|u\|^2)\D u=\l u^q & \mbox{in $\O$,}\\
u=0 & \mbox{on $\p\O$,}
\end{array}\right.
\end{equation}
where $\l\in\R$ and $0<q<1$.  This problem was analyzed in \cite{julioclaudianor1} when $M$ verifies $(M_1)$ and $(H)$. We are going to show that (\ref{ex1}) possesses a positive solution if and only if $\l>0$. From the maximum principle, if $\l\leq 0$ problem (\ref{ex1}) does not have any positive solution. Assume $\l>0$ and
take as sub-supersolutions $\un u=\e\v_1$ and $\ov u=Ke$ with $\e,K>0$ to be chosen.
Then, $\ov u$ is supersolution if
$$
K^{1-q}\geq \frac{1}{m_0}\lambda \|e\|_\infty^q.
$$
Fix such $K$. Then, $\un u$ is subsolution if
$$
M({\cal R}(w))\e^{1-q}\leq\frac{\lambda}{\lambda_1},\quad\forall w\in[\un u,\ov u].
$$
It is enough to take $\e$ small such that the above inequality holds and that $\un u\leq \ov u$.

\noindent\underline{Example 2:} Consider the classical concave-convex equation
\begin{equation} \label{ex2}
\left \{ \begin{array}{ll}
-M(\|u\|^2)\D u=\l u^q +u^p& \mbox{in $\O$,}\\
u=0 & \mbox{on $\p\O$,}
\end{array}\right.
\end{equation}
where $\l\in\R$ and $0<q<1<p$. Again we assume only that $M$ verifies $(M_3)$. We show that there exists at least a positive solution for $\l$ small and positive. For that, again take the same sub-supersolution of the above example. We can show that $\ov u=Ke$ es supersolution provided of
$$
m_0K^{1-q}\geq \l\|e\|_\infty^q+K^{p-q}\|e\|_\infty^p.
$$
Then, there exists $\l_0>0$ such that for $\l\in (0,\l_0)$, there exists $K_0$ such that $\ov u=K_0e $ is supersolution.

Now, $\un u=\e\v_1$ is subsolution provided of
$$
M({\cal R}(w))\e^{1-q}\lambda_1\leq \lambda+\e^{p-q}\v_1^{p-q},\quad\forall w\in[\un u,\ov u].
$$
It suffices again to take $\e$ small.

\noindent\underline{Example 3:} Consider now the logistic  equation
\begin{equation} \label{ex3}
\left \{ \begin{array}{ll}
-M(\|u\|^2)\D u=\l u -u^p& \mbox{in $\O$,}\\
u=0 & \mbox{on $\p\O$,}
\end{array}\right.
\end{equation}
where $\l\in\R$ and $1<p$. This equation was studied in \cite{chipotjulio} when $M=M(u)$ is a continuos function from $L^p(\O)$ into $\R$, and $m_0\leq M\leq m_\infty$.  They used a fixed point argument  and showed the existence of positive solution for $\l>\l_1m_\infty$ (Theorem 2.1).

We obtain a similar result for the Kirchhoff equation (\ref{ex3}) by the sub-supersolution method.
Take $\ov u=\l$, it is clear that $\ov u$ is supersolution. As subsolution $\un u=\e\v_1$. Then, we need that
$$
M({\cal R}(w) )\l_1+(\e\v_1)^{p-1}\leq \l, \quad\forall w\in[\un u,\ov u].
$$
Then, there exists at least a positive solution for $\l>\l_1m_\infty.$

\vskip0.5cm \noindent {\bf Acknowledgements.}  AS by Ministerio de Econom\'ia y Competitividad under grant MTM2012-31304.

\end{document}